\newcommand{\NN}{\mathbb{N}}
\newcommand{\RR}{\mathbb{R}}
\newcommand{\SL}{\operatorname{SL}}
\newcommand{\vol}{\operatorname{vol}}
\title{On The Limiting Distribution of Free Path Lengths for Flat Surfaces with Circular Obstacles}
\author{Diaaeldin Taha}
\address{Diaaeldin Taha, Ruprecht-Karls-Universit\"{a}t Heidelberg, Mathematisches Institut, Im Neuenheimer Feld 205, 69120 Heidelberg, Germany}
\email{dtaha@mathi.uni-heidelberg.de}
\newtheorem{proposition}{Proposition}
\newtheorem{lemma}{Lemma}
\newtheorem{theorem}{Theorem}
\begin{document}

\begin{abstract}
In this note, we prove the existence of a limiting distribution of the free path lengths on flat surfaces with circular obstacles as the radius of the obstacles goes to zero. Moreover, we relate this distribution to the distribution of the heights of zippered rectangle decompositions of flat surfaces.
\end{abstract}

\maketitle

\section{Introduction}

The Sinai billiard table is a billiard system on the two-dimensional torus with one or more circular regions (obstacles) removed. The model was originally introduced by Lorentz \cite{lorentz1905mouvement} in 1905 to model the motion of electrons in metals. The system has since been extensively analyzed from the point of view of dynamical systems \cite{bunimovich2000billiards, chernov2000entropy, dahlqvist1997lyapunov, friedman1984universal, gallavotti1975lectures, golse2006statistics, sinai1970dynamical}. One quantity of interest is the free path length between collisions of a free moving particle with the obstacles. The distribution of the free path lengths in the small obstacle limit was first observed by Dahlqvist \cite{dahlqvist1997lyapunov} in 1997, and established rigorously by Boca and Zaharescu \cite{boca2007distribution} in 2007 using analytic number theoretic techniques applied to continued fractions and Farey sequences. The distribution was also independently calculated in 2008 by Caglioti and Golse \cite{caglioti2008boltzmann} using continued fractions, and by Marklof and Str\"{o}mbergsson \cite{marklof2010distribution} using homogeneous dynamics. Remarkably, Marklof-Str\"{o}mbergsson's approach works in any dimension and extends to aperiodic quasicrystalline point configurations.

One fascinating fact about the free path distribution of the Sinai billiard table is that it agrees with the limiting average distribution of the gaps of circle rotations \cite{bleher1991energy, mazel1992limiting,  greenman1996generic, polanco2016continuous}. This connection is demystified when one observes that circle rotation gaps can be interpreted as zippered rectangle heights of tori \cite{taha2021threegap}, and that these heights approximate the free path lengths. In fact, the latter observation is precisely how the limiting distribution is established by Boca and Zaharescu \cite{boca2007distribution} and by Caglioti and Golse \cite{caglioti2008boltzmann}. These observations were also touched on by Marklof and Str\"{o}mbergsson \cite{marklof2017three} and by Marklof \cite{marklofrandom} concerning the heights of unimodular lattice points. As two dimensional tori are the simplest example of flat surfaces, it is natural to ask if the connections mentioned above extend to other flat surfaces. In this short note, we synthesize these connections and extend them to general flat surfaces in one fell swoop.

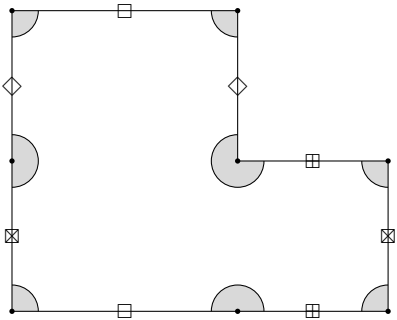
\begin{figure}
\centering
\begin{tikzpicture}
% node coordinates
\coordinate (O) at (0, 0);
\coordinate (I) at (3, 0);
\coordinate (A) at (5, 0);
\coordinate (B) at (5, 2);
\coordinate (C) at (3, 2);
\coordinate (D) at (3, 4);
\coordinate (E) at (0, 4);
\coordinate (J) at (0, 2);

% scatterers
\pgfmathsetmacro{\eps}{0.35}
\fill[gray!30] (O) -- (\eps, 0) arc(0:90:\eps) -- cycle;
\draw (\eps, 0) arc(0:90:\eps);
\fill[gray!30] (I) -- (3+\eps, 0) arc(0:180:\eps) -- cycle;
\draw (3+\eps, 0) arc(0:180:\eps);
\fill[gray!30] (A) -- (5, \eps) arc(90:180:\eps) -- cycle;
\draw (5, \eps) arc(90:180:\eps);
\fill[gray!30] (B) -- (5-\eps, 2) arc(180:270:\eps) -- cycle;
\draw (5-\eps, 2) arc(180:270:\eps);
\fill[gray!30] (C) -- (3, 2+\eps) arc(90:360:\eps) -- cycle;
\draw (3, 2+\eps) arc(90:360:\eps);
\fill[gray!30] (D) -- (3-\eps, 4) arc(180:270:\eps) -- cycle;
\draw (3-\eps, 4) arc(180:270:\eps);
\fill[gray!30] (E) -- (0, 4-\eps) arc(270:360:\eps) -- cycle;
\draw (0, 4-\eps) arc(270:360:\eps);
\fill[gray!30] (J) -- (0, 2-\eps) arc(270:450:\eps) -- cycle;
\draw (0, 2-\eps) arc(270:450:\eps);

% table
\draw[->] (O)--(A)--(B)--(C)--(D)--(E)--cycle;
\fill (O) circle[radius=1pt];
\fill (I) circle[radius=1pt];
\fill (A) circle[radius=1pt];
\fill (B) circle[radius=1pt];
\fill (C) circle[radius=1pt];
\fill (D) circle[radius=1pt];
\fill (E) circle[radius=1pt];
\fill (J) circle[radius=1pt];
\node at ($0.5*(O)+0.5*(I)$) {$\scriptstyle\Box$};
\node at ($0.5*(D)+0.5*(E)$) {$\scriptstyle\Box$};
\node at ($0.5*(I)+0.5*(A)$) {$\scriptstyle\boxplus$};
\node at ($0.5*(B)+0.5*(C)$) {$\scriptstyle\boxplus$};
\node at ($0.5*(A)+0.5*(B)$) {$\scriptstyle\boxtimes$};
\node at ($0.5*(J)+0.5*(O)$) {$\scriptstyle\boxtimes$};
\node[rotate=45] at ($0.5*(C)+0.5*(D)$) {$\scriptstyle\Box$};
\node[rotate=45] at ($0.5*(E)+0.5*(J)$) {$\scriptstyle\Box$};

\end{tikzpicture}
\caption{A translation surface $S$ in $\mathcal{H}^1(2)$ arising from a polygon with parallel sides identified. The union of the shaded balls forms the obstacle set $D_\epsilon(S)$.}
\end{figure}

Let $S$ be a unit area flat surface, and let $\Sigma(S) \subset S$ be the set of conical points on $S$. For any positive $\epsilon$ that is smaller than the shortest distance between a pair of distinct conical points of $S$, we write
\begin{equation}
    D_\epsilon(S) := \bigcup_{p \in \Sigma(S)} B^o(p, \epsilon)
\end{equation}
for the open $\epsilon$-neighborhood of $\Sigma(S)$ in $S$, and we consider the free path lengths of the unit speed linear flow on $S \setminus D_\epsilon(S)$. That is, we consider the time it takes a free moving particle on $S \setminus D_\epsilon(S)$ to hit the boundary $\partial D_\epsilon(S)$. Our main theorem on the distribution of free path lengths as $\epsilon \to 0+$ is the following.

\begin{theorem}
Let $\mathcal{H}^1(\alpha)$ be a moduli space of unit area flat surfaces, and let $\mu$ be an ergodic $\SL(2, \RR)$-invariant measure on $\mathcal{H}^1(\alpha)$. Then for $\mu$-a.e. flat surface $S$ in $\mathcal{H}^1(\alpha)$, the limiting average distribution of the free path lengths for the linear flow on $S \setminus D_\epsilon(S)$ exists as $\epsilon \to 0+$ and is equal to the distribution with respect to $\mu$ of zippered rectangle heights over horizontal unit lengths transversals centered at the conical points of the surfaces in $\mathcal{H}^1(\alpha)$.
\end{theorem}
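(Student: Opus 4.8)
The plan is to reduce the statement to a pointwise ergodic theorem for the geodesic flow on $\mathcal{H}^1(\alpha)$, by means of a renormalization dictionary that converts the shrinking-obstacle limit into a fixed-scale computation on a rescaled surface. Throughout I fix the direction of the linear flow to be vertical; by the $\SL(2,\RR)$-invariance of $\mu$ and a rotation in $\mathrm{SO}(2)$ this entails no loss of generality. Write $t = \log(1/\epsilon)$ and let $g_t = \operatorname{diag}(e^{t}, e^{-t})$, so that $g_t$ contracts the vertical (flow) direction by $\epsilon$ and expands the horizontal (transverse) direction by $1/\epsilon$. Under $g_t$ a vertical free path of order $1/\epsilon$ is rescaled to order $1$, the transverse capture window of width $\epsilon$ is rescaled to unit width, and the obstacle disk $B^o(p,\epsilon)$ is mapped to a nearly horizontal needle whose length is of unit order and whose thickness is $O(\epsilon^2)$. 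The first observation, then, is that on the rescaled surface $g_t S$ the obstacle set degenerates to the union of horizontal unit-length transversals centered at the conical points, and the rescaled vertical free path from a point is, up to a negligible correction, the first-return time of the vertical flow on $g_t S$ to this union of transversals.

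First I would make this dictionary precise. Using the collision cross-section --- trajectories issuing from $\partial D_\epsilon(S)$ with its natural measure --- rather than the bulk Lebesgue measure, the free path length between successive collisions transports under $g_t$ to the return time of the vertical flow on $g_t S$ to the horizontal transversals, which is exactly a zippered rectangle height. Denoting by $F_L(T)$ the transversal-weighted proportion of zippered rectangle heights on a surface $T$ exceeding $L$, this step yields the identity
\begin{equation}
\mathbb{P}\bigl(\text{rescaled free path on } S \setminus D_\epsilon(S) > L\bigr) = F_L(g_t S) + o(1), \qquad t = \log(1/\epsilon),
\end{equation}
where the error $o(1)$ as $\epsilon \to 0$ absorbs the discrepancy between the disk obstacle and the idealized flat transversal (the $\sqrt{1 - (\cdot)^2}$ correction near the endpoints), the finitely many trajectories that graze a transversal endpoint, and the contribution of points whose path is shorter than the obstacle scale.

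With the dictionary in hand, the limit is extracted from the geodesic orbit $\{g_t S\}_{t \ge 0}$. Note that the right-hand side $F_L(g_t S)$ need not converge as $t \to \infty$, since a single geodesic orbit does not equidistribute; it is this oscillation that the word \emph{average} in the statement compensates, by passing to the logarithmic average in $\epsilon$ (equivalently the Cesaro average in $t$). Since $\mu$ is an ergodic $\SL(2,\RR)$-invariant measure, Moore's ergodicity theorem (Howe--Moore) guarantees that the diagonal one-parameter subgroup $g_t$ acts ergodically --- indeed mixingly --- on $(\mathcal{H}^1(\alpha), \mu)$. Provided $F_L$ is a bounded, $\mu$-measurable, and $\mu$-a.e. continuous function on $\mathcal{H}^1(\alpha)$, Birkhoff's pointwise ergodic theorem gives, for $\mu$-a.e. $S$,
\begin{equation}
\lim_{T \to \infty} \frac{1}{T}\int_0^T F_L(g_t S)\, dt = \int_{\mathcal{H}^1(\alpha)} F_L \, d\mu.
\end{equation}
The left-hand side is, by the dictionary, the limiting average distribution of the free path lengths, and the right-hand side is the distribution of zippered rectangle heights with respect to $\mu$; choosing a countable dense set of thresholds $L$, intersecting the corresponding full-measure sets of $S$, and invoking monotonicity and right-continuity of distribution functions produces a single full-measure set valid for all $L$, which is the assertion of the theorem.

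The main obstacle I anticipate is not the renormalization, which is geometrically clean, but the verification that $F_L$ is a bona fide integrable, $\mu$-a.e. continuous function on the non-compact moduli space $\mathcal{H}^1(\alpha)$, together with the uniform control of the $o(1)$ error in the dictionary. Both difficulties concentrate on surfaces with short saddle connections --- the cusp neighborhoods of $\mathcal{H}^1(\alpha)$ --- where distinct transversals can collide, a single transversal can be met many times over a short vertical distance, and the free-path geometry degenerates. Controlling the escape of mass along $\{g_t S\}$, so that the time the orbit spends in these cusp neighborhoods is negligible in the Cesaro average, and showing that the collision-to-transversal correspondence remains measure-preserving up to $o(1)$ there, is, I expect, where the essential work lies.
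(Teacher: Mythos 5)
Your renormalization dictionary is essentially the paper's: the paper first replaces the disk obstacles by perpendicular segment obstacles at an $O_\alpha(\epsilon^2)$ cost (Approximation Lemma), then applies $a_{\log\frac{1}{2\epsilon}} r_{-\pi/2-\theta}$ to turn the free path length at scale $\epsilon$ in direction $\theta$ into a zippered rectangle height over unit transversals on a renormalized surface (Renormalization Lemma). The gap is in the final step. You read the word \emph{average} in the statement as licensing a logarithmic average in $\epsilon$ (a Ces\`aro average along the geodesic flow) and then invoke Birkhoff for $g_t$ in a single fixed direction. But in the paper the average is over initial conditions: $F_\epsilon(t;S)$ is defined as $\frac{1}{2\pi}\int_0^{2\pi}\int_S \cdots\, d\vol_S(p)\, d\theta$, and the theorem asserts a genuine limit $\lim_{\epsilon\to 0+}F_\epsilon(t;S)$ with no averaging in $\epsilon$ whatsoever. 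Your reduction ``without loss of generality to the vertical direction'' discards exactly the $\theta$-average that makes this limit exist: as you yourself observe, $F_L(g_t S)$ need not converge along a single geodesic orbit, so your argument proves only the weaker Ces\`aro-in-$\epsilon$ statement for one direction, which is not the theorem.

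The correct final ingredient is the circle-average equidistribution theorem of Eskin and Masur (Theorem 1.5 of \cite{eskin2001asymptotic}, resting on Nevo's ergodic theorem): for $\mu$-a.e. $S$ and suitable test functions $f$,
\begin{equation}
\frac{1}{2\pi}\int_0^{2\pi} f\left(a_{t}\, r_{\theta}\, S\right) d\theta \;\longrightarrow\; \int_{\mathcal{H}^1(\alpha)} f\, d\mu \qquad (t \to \infty),
\end{equation}
which, applied to the renormalized expression $\frac{1}{2\pi}\int_0^{2\pi} f_{1/2}(t; a_{\log\frac{1}{2\epsilon}} r_{-\pi/2-\theta}\, S, -\pi/2)\,d\theta$ from the Renormalization Lemma, yields the genuine limit as $\epsilon\to 0+$; this convergence without time-averaging is strictly stronger than what Birkhoff provides and is the nontrivial input. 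Your concerns about integrability and a.e.-continuity of the test function and about cusp excursions are legitimate and are precisely the hypotheses the Eskin--Masur theorem is designed to handle, but they do not rescue the Birkhoff route. A secondary discrepancy: you propose the collision cross-section measure on $\partial D_\epsilon(S)$, whereas the paper averages against $d\vol_S(p)\,d\theta$ on $T^1(S)$; these give different (length-biased versus unbiased) free path distributions, so the normalization should be matched to the paper's before any comparison of limits.
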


In the remainder of this note, we review basic definitions from the theory of flat surfaces, give precise definitions of free path lengths and their distributions, and prove the main theorem. The zippered rectangle heights distribution mentioned in the theorem is defined in \cref{prop: heights distribution}.

\section{Flat surfaces}

A \emph{flat surface} is a collection of polygons $\{P_j\}_{j=1}^k$ in the plane $\RR^2$ with identifications of parallel sides so that 1. the sides are identified by translations, 2. every side is identified with another side, and 3. the outward pointing normals to identified sides point in opposite directions. If $\sim$ denotes the side identification equivalence relation, then the flat surface is defined as $S = \cup_{j=1}^k P_j/\sim$. Any flat surface is a closed surface with a flat metric outside a finite set of points that are referred to as the \emph{singularities} of the flat surface. Moreover, the conical angle at any point on a flat surface is a positive integer multiple of $2\pi$, and is exactly $2\pi$ except at the singularities.

A natural linear flow $(\phi_S^t)_{t \in \RR}$ can be defined on the unit tangent bundle $T^1(S) = S \times \mathbb{S}^1$ of a flat surface as follows: Given an initial state $(p, \theta) \in T^1(S)$ and a duration $t \in \RR$, move a point particle at $p$ along the geodesic in the direction $\theta$ for time $t$, and denote the final position by $\phi_S^t(p, \theta)$. One has to consider the branching of flat surfaces at singularities when defining linear flows. However, the initial states whose trajectories under $\phi_S^\cdot$ hit a singularity of $S$ form a null subset of the unit tangent bundle $T^1(S)$, and we can ignore those initial states in our measure-theoretic analysis.

The group $\SL(2, \RR)$ acts on planar sets and this action gives rise to a natural action on flat surfaces. Two one-parameter subgroups of $\SL(2, \RR)$ are of particular importance for this work: the rotations
\begin{equation}
    r_\theta := \begin{pmatrix}\cos\theta & -\sin\theta \\ \sin\theta & \cos\theta\end{pmatrix},\ \theta \in \mathbb{S}^1,
\end{equation}
and the scaling matrices
\begin{equation}
    a_t := \begin{pmatrix}e^{t} & 0 \\ 0 & e^{-t}\end{pmatrix},\ t \in \RR.
\end{equation}

In what follows, we denote by $\mathcal{H}^1(\alpha)$, $\alpha = (\alpha_1, \alpha_2, \cdots, \alpha_n) \in \left(\NN \cup \{0\}\right)^n$, the moduli space of all unit area flat surfaces with conical angles $2\pi(\alpha_i + 1)$, $1 \leq i \leq n$. Points which correspond to $\alpha_i = 0$ are customarily referred to as \emph{marked points}.

\section{Free path lengths and their distributions}

Let $S \in \mathcal{H}^1(\alpha)$ be a flat surface, $p \in S$ be a point, $\theta \in \mathbb{S}^1$ be a direction, and $\epsilon > 0$ be an obstacle radius. We are interested in the free path lengths for two types of obstacles:
\begin{itemize}
    \item circular obstacles: denote by $\tau_\epsilon(S, p, \theta)$ the smallest $t \geq 0$ such that $\phi_S^t(p, \theta)$ is in $\overline{D_\epsilon(S)}$, and
    \item segment obstacles: denote by $\tilde{\tau}_\epsilon(S, p, \theta)$ the smallest $t \geq 0$ such that $\phi_S^t(p, \theta)$ can be connected to a point in $\Sigma(S)$ by a line segment perpendicular to $\theta$ of length that does not exceed $\epsilon$.
\end{itemize}

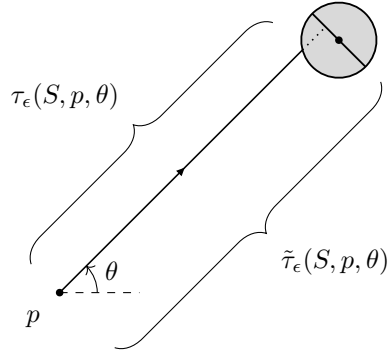
\begin{figure}
\centering
\begin{tikzpicture}[scale=0.5, rotate=45]
\pgfmathsetmacro{\x}{10}

% scatterers
\fill[gray!30] (\x, 0) circle (1);
\draw[line width=0.7] (\x, 0) circle (1);
\fill (\x,0) circle[radius=2.87pt]; %1/0.35
\draw[line width=0.7] (\x, -1)--(\x, 1);

% trajectories
\draw[line width=0.6] (0, 0.5)--(\x - 1 + 0.13, 0.5) node[currarrow,pos=0.5,rotate=45,scale=0.5] {};
\draw[dotted, line width=0.6] (\x - 1 + 0.13, 0.5)--(\x, 0.5);

% initial position
\fill (0, 0.5) circle[radius=2.87pt];
\node at (-1,0.5) {$p$};
\draw[dashed] (0,0.5)--(1.5,-1);
\coordinate (O) at (0, 0.5);
\coordinate (I) at (1, 0.5);
\coordinate (P) at (1, -0.5);
\pic [draw, ->, "$\theta$", angle eccentricity=1.5] {angle = P--O--I};

% braces
\draw[decorate,decoration={brace,amplitude=10pt,raise=1pt,mirror},yshift=0pt] (0, -1.5) -- (\x, -1.5) node [midway,auto,swap,outer sep=10pt]{$\tilde{\tau}_\epsilon(S, p, \theta)$};
\draw[decorate,decoration={brace,amplitude=10pt,raise=1pt},yshift=0pt] (0, 1.5) -- (\x - 1 + 0.13, 1.5) node [midway,auto,outer sep=10pt]{$\tau_\epsilon(S, p, \theta)$};
\end{tikzpicture}
\caption{The free path lengths $\tau_\epsilon(S, p, \theta)$ and $\tilde{\tau}_\epsilon(S, p, \theta)$ for an initial state $(p, \theta) \in T^1(S)$. The circular obstacle has radius $\epsilon$.}
\end{figure}

Now, we write
\begin{equation}
    F_\epsilon(t; S) := \frac{1}{2\pi} \int_0^{2\pi} \int_S \mathbbm{1}_{(t, \infty)}\left(2\epsilon\tau_\epsilon(S, p, \theta)\right)\,d\vol_S(p)\,d\theta
\end{equation}
for the average distribution of free path lengths for circular obstacles. Similarly, we write
\begin{equation}
    \tilde{f}_\epsilon(t; S, \theta) := \int_S \mathbbm{1}_{(t, \infty)}\left(2\epsilon\tilde{\tau}_\epsilon(S, p, \theta)\right)\,d\vol_S(p)
\end{equation}
for the distribution of free path lengths for segment obstacles perpendicular to $\theta$, and
\begin{equation}
    \tilde{F}_\epsilon(t; S) := \frac{1}{2\pi} \int_0^{2\pi} f_\epsilon(t; S, \theta)\,d\theta
\end{equation}
for the average distribution of free path lengths for segment obstacles in all directions. Following \cite{taha2021threegap}, the distribution $\tilde{f}_\epsilon$ can be interpreted as that of the heights of the zippered rectangle decomposition of $S$ over transversals of lengths $2\epsilon$ centered at $\Sigma(S)$ and perpendicular to $\theta$.

\section{Proof of the main theorem}

The main theorem follows from the following three results.

\begin{lemma}[Approximation]
For any $\epsilon > 0$, $S \in \mathcal{H}^1(\alpha)$, and $t \geq 0$,
\begin{equation}
    F_\epsilon(t; S) = \tilde{F}_\epsilon(t; S) + O_\alpha(\epsilon^2).
\end{equation}
\end{lemma}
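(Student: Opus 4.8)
The plan is to compare the two free-path functionals pointwise by analyzing, for each fixed direction $\theta$ and each conical point $p_0 \in \Sigma(S)$, the discrepancy between the region of initial positions that hit the circular obstacle $B^o(p_0,\epsilon)$ and the region that hits the corresponding segment transversal of length $2\epsilon$ through $p_0$ perpendicular to $\theta$. The key geometric observation is that a trajectory in direction $\theta$ terminates at the circular obstacle exactly when its forward ray meets the disk of radius $\epsilon$, whereas it terminates at the segment obstacle exactly when the ray crosses the diameter segment; these two stopping rules agree except near the two ``caps'' of the disk that lie off the diameter, and the stopping times differ by at most $\epsilon$ there. First I would fix $\theta$ and, by rotating coordinates so that $\theta$ points in a standard direction, reduce to comparing $\tilde{f}_\epsilon$ and the $\theta$-slice of the circular count; here I should be careful that the displayed definition writes $\tilde{F}_\epsilon$ in terms of an undefined $f_\epsilon$, which I read as the intended $\tilde f_\epsilon$.

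Next I would quantify the symmetric difference of the two stopping regions. Because $\epsilon$ is smaller than the injectivity radius determined by the minimal distance between distinct conical points, the obstacles are embedded disjoint disks and the analysis localizes to a single disk; the flat metric is Euclidean on each such neighborhood, so the computation reduces to planar geometry. The set of starting points whose $\theta$-ray hits the disk but not the diameter (or vice versa) is contained in the two circular segments cut off by the diameter, each of which, when swept by the flow, contributes a region whose area I would bound. The crucial point is that the rescaling by $2\epsilon$ in the indicator arguments means I am comparing $\mathbbm{1}_{(t,\infty)}(2\epsilon\tau_\epsilon)$ with $\mathbbm{1}_{(t,\infty)}(2\epsilon\tilde\tau_\epsilon)$, and the two differ only on the set where $2\epsilon\tau_\epsilon$ and $2\epsilon\tilde\tau_\epsilon$ straddle the threshold $t$; since $|\tau_\epsilon - \tilde\tau_\epsilon| \leq \epsilon$ on the relevant caps, the rescaled difference is $O(\epsilon^2)$ and the measure of the straddling set is controlled.

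The estimate I expect to drive the error bound is that the area (in $d\vol_S$) of the set of $p$ for which the two indicators disagree, for a single disk and a single direction, is $O(\epsilon^2)$: the disagreement region is confined to a strip whose width transverse to $\theta$ is at most $\epsilon$ and whose length along $\theta$ is $O(\epsilon)$, giving area $O(\epsilon^2)$ per conical point. Summing over the $n = |\alpha|$ conical points absorbs the dependence on $\alpha$ into the implied constant, yielding the $O_\alpha(\epsilon^2)$ claimed, uniformly in $\theta$; averaging over $\theta \in [0,2\pi)$ then preserves the bound since the integrand is uniformly $O_\alpha(\epsilon^2)$. I would phrase the per-disk estimate once and invoke $\theta$-uniformity to pass it through the $\theta$-average.

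The main obstacle will be making the ``caps'' comparison fully rigorous rather than heuristic: one must verify that for a starting point $p$ whose trajectory would strike one of the two caps of the disk, the segment-stopping time and the circular-stopping time genuinely satisfy $|\tau_\epsilon - \tilde\tau_\epsilon| \leq \epsilon$, and that no trajectory can hit the circle without either hitting the diameter nearby or lying in a cap. Subtleties to watch are trajectories that pass tangentially or nearly so (these are exactly the ones defining the cap boundaries and contribute the $O(\epsilon^2)$ area), and trajectories that might re-enter a neighborhood of the same conical point after wrapping around the surface — but since we stop at the \emph{first} hitting time and $\epsilon$ is below the injectivity radius, the first encounter is always the local one, so this cannot occur before the stopping time. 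Once the local planar picture is pinned down, the rest is a direct area bound and an interchange of the $\theta$-integral with the pointwise estimate.
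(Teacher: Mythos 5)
Your overall strategy matches the paper's: reduce to bounding, for each fixed $\theta$, the measure of the set of $p$ on which the two indicators $\mathbbm{1}_{(t,\infty)}(2\epsilon\tau_\epsilon)$ and $\mathbbm{1}_{(t,\infty)}(2\epsilon\tilde{\tau}_\epsilon)$ disagree, using the comparison $\tau_\epsilon \leq \tilde{\tau}_\epsilon \leq \tau_\epsilon + \epsilon$ and an $O(\epsilon^2)$ area bound summed over conical points, then average over $\theta$. (The paper organizes this as a four-way partition $R_{i,j}(\theta)$, notes that one disagreement set is empty because $\tau_\epsilon \leq \tilde{\tau}_\epsilon$, and separately accounts for the disks themselves, of total area $\kappa\pi\epsilon^2$.) Your reading of $f_\epsilon$ as the intended $\tilde{f}_\epsilon$ is also correct.

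The one step that fails as literally written is the claim that the disagreement region is ``confined to a strip whose width transverse to $\theta$ is at most $\epsilon$ and whose length along $\theta$ is $O(\epsilon)$.'' For $t>0$ the straddling set $\{p : 2\epsilon\tau_\epsilon(p) \leq t < 2\epsilon\tilde{\tau}_\epsilon(p)\}$ consists of starting points whose trajectories first reach an obstacle only after time roughly $t/(2\epsilon)$; it is not localized near the conical points, and on a higher-genus surface it is a long immersed region wrapping around the surface (and possibly split at singularities), not a planar neighborhood of the caps. The missing ingredient --- and the paper's actual argument --- is measure preservation of the linear flow: every $p$ in the straddling set satisfies $\phi_S^{t/(2\epsilon)}(p,\theta) \in \overline{D_\epsilon(S)}$, because between first entering a disk and first crossing its perpendicular diameter the trajectory remains inside that disk, so the straddling set is contained in the preimage of $\overline{D_\epsilon(S)}$ under a volume-preserving map and hence has measure at most $\kappa\pi\epsilon^2$. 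With that substitution your plan goes through; the local planar ``cap'' analysis you outline is only needed to verify $0 \leq \tilde{\tau}_\epsilon - \tau_\epsilon \leq \epsilon$ and the containment of the intermediate trajectory segment in the disk, both of which are elementary once $\epsilon$ is below the prescribed threshold.
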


\begin{proof}
Consider for any $\theta \in \mathbb{S}^1$ the partition of $S \setminus D_\epsilon(S)$ into the sets
\begin{equation}
R_{i,j}(\theta) := \left\{p \in S \setminus D_\epsilon(S) \middle| \begin{subarray}{c}
\mathbbm{1}_{(t, \infty)}\left(2\epsilon\tau_\epsilon(S, p, \theta)\right)=i\text{, and} \\
\mathbbm{1}_{(t, \infty)}\left(2\epsilon\tilde{\tau}_\epsilon(S, p, \theta)\right)=j
\end{subarray}\right\},
\end{equation}
where the indices $i, j$ run over $\{0, 1\}$. Since
\begin{equation}
    \left|F_\epsilon(t; S) - \tilde{F}_\epsilon(t; S)\right| \leq \frac{1}{2\pi} \int_0^{2\pi}
    \vol_S(R_{0,1}(\theta)) +
    \vol_S(R_{1, 0}(\theta)) +
    \vol_S(D_\epsilon(S)) \,d\theta,
\end{equation}
it suffices to bound $\vol_S(R_{0,1}(\theta))$, $\vol_S(R_{1, 0}(\theta))$, and $\vol_S(D_\epsilon(S))$ to prove the lemma. For any $\theta \in \mathbb{S}^1$ and $p \in S$, we have that $\tau_\epsilon(S, p, \theta) \leq \tilde{\tau}(S, p, \theta)$. It is thus necessarily true that $\vol_S(R_{1, 0}(\theta)) = 0$ for all $\theta \in \mathbb{S}^1$. It is also evident that if $\kappa$ is the sum of the ramification indices of the points in $\Sigma(S)$, then $\vol_S(D_\epsilon(S)) = \kappa \pi \epsilon^2$. Finally, if $p \in R_{0, 1}(\theta)$, then $\phi_S^t(p, \theta) \in D_\epsilon(S)$, and so $\vol_S(R_{0, 1}(\theta)) \leq \kappa \pi \epsilon^2$. The lemma is now proved. 
\end{proof}

\begin{lemma}[Renormalization]
\label{lemm: renormalization}
For any $\epsilon > 0$, $S \in \mathcal{H}^1(\alpha)$, and $t \geq 0$, we have
\begin{equation}
    \label{eq: renormalized heights distribution}
    \tilde{F}_\epsilon(t; S) = \frac{1}{2\pi} \int_0^{2\pi} f_\frac{1}{2}(t; a_{\log\frac{1}{2\epsilon}} r_{- \pi/2 - \theta} S, -\pi/2)\,d\theta.
\end{equation}
\end{lemma}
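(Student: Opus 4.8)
The plan is to unravel the definitions on both sides and exhibit an explicit change of variables that matches them. The key observation is that the $\SL(2,\RR)$-action renormalizes the geometry: applying $a_{\log\frac{1}{2\epsilon}}$ rescales lengths by $e^{\log\frac{1}{2\epsilon}} = \frac{1}{2\epsilon}$ in the horizontal direction and by $2\epsilon$ in the vertical direction, so a transversal of length $2\epsilon$ is carried to a transversal of length $1$, while free path lengths (which are vertical heights after the rotation aligns $\theta$ with the distinguished direction) are scaled by the reciprocal factor. This is exactly why the factor $2\epsilon$ appears inside the indicator on the left-hand side and disappears on the right.

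First I would fix $\theta$ and analyze a single summand. The definition of $\tilde{f}_\epsilon(t; S, \theta)$ involves segment obstacles perpendicular to $\theta$ of length $\epsilon$ (half-length, i.e.\ reaching distance $\epsilon$ to either side), and the rotation $r_{-\pi/2 - \theta}$ is chosen precisely so that the direction $\theta$ on $S$ becomes the direction $-\pi/2$ (the downward vertical) on the rotated surface; this is the standard normalization under which zippered rectangle heights are read off vertically. I would verify the angle bookkeeping carefully: one wants $r_{-\pi/2-\theta}$ applied to the unit vector in direction $\theta$ to yield the unit vector in direction $-\pi/2$, which follows from $(-\pi/2 - \theta) + \theta = -\pi/2$. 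Next I would track how $\tilde{\tau}_\epsilon(S, p, \theta)$ transforms. Because $\SL(2,\RR)$ acts on the surface by acting on the defining polygons, a geodesic trajectory of $S$ in direction $\theta$ maps to a geodesic trajectory of $gS$ in direction determined by $g$, and the perpendicular segment of length $\epsilon$ maps to a perpendicular segment whose length is scaled by the appropriate singular value of $g$. With $g = a_{\log\frac{1}{2\epsilon}} r_{-\pi/2-\theta}$, the rotation preserves lengths and the scaling stretches the vertical/horizontal axes; after the rotation the relevant transversal is horizontal, so it gets scaled by $2\epsilon$, turning the obstacle half-length $\epsilon$ into $\tfrac{1}{2\epsilon}\cdot\epsilon = \tfrac12$, and simultaneously the free path length gets scaled by $\tfrac{1}{2\epsilon}$, converting $2\epsilon\tilde{\tau}_\epsilon$ into $\tilde{\tau}_{1/2}$ on the new surface. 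Since $\vol_S$ is $\SL(2,\RR)$-invariant (the action preserves area), the integral $\int_S$ defining $\tilde f_\epsilon$ on $S$ equals the corresponding integral on $gS$, giving
\begin{equation}
    \tilde{f}_\epsilon(t; S, \theta) = \tilde f_{1/2}\!\left(t;\, a_{\log\frac{1}{2\epsilon}} r_{-\pi/2-\theta} S,\, -\pi/2\right).
\end{equation}

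I would then integrate this identity over $\theta \in [0, 2\pi)$ and divide by $2\pi$: the left-hand side is the definition of $\tilde F_\epsilon(t; S)$, and the right-hand side is exactly the claimed integral, completing the proof. (Here I note the apparent typo in the excerpt's definition of $\tilde F_\epsilon$ writing $f_\epsilon$ rather than $\tilde f_\epsilon$ under the integral, and likewise on the right-hand side of \cref{eq: renormalized heights distribution}; I would state the identity consistently in terms of $\tilde f$.) The main obstacle I anticipate is the careful verification of the length-scaling under the combined action $g = a_{\log\frac{1}{2\epsilon}} r_{-\pi/2-\theta}$ — specifically confirming that, after the rotation places $\theta$ along $-\pi/2$, it is the \emph{horizontal} singular direction of $a_t$ (scaling by $e^t = \tfrac{1}{2\epsilon}$) that governs both the perpendicular transversal length and the reciprocal scaling of the travel time, and that these two scalings are genuine reciprocals so the two appearances of the factor $2\epsilon$ cancel correctly. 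Everything else is bookkeeping built on the $\SL(2,\RR)$-invariance of the area measure and the equivariance of the linear flow under the group action.
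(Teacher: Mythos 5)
Your proposal is correct and takes essentially the same route as the paper: first use $r_{-\pi/2-\theta}$ to rotate the direction $\theta$ to $-\pi/2$, then use $a_{\log\frac{1}{2\epsilon}}$ to rescale the horizontal transversal half-length from $\epsilon$ to $\tfrac12$ and the vertical travel time by the reciprocal factor $2\epsilon$, and finally average over $\theta$; you simply spell out the bookkeeping that the paper leaves implicit. The only blemish is the sentence where you attach ``scaled by $2\epsilon$'' to the horizontal transversal and ``scaled by $\tfrac{1}{2\epsilon}$'' to the free path length (these are swapped), but your displayed computations $\tfrac{1}{2\epsilon}\cdot\epsilon=\tfrac12$ and $2\epsilon\tilde{\tau}_\epsilon \mapsto \tilde{\tau}_{1/2}$, as well as your closing summary, have the scalings right.
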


\begin{proof}
For any $\theta \in \mathbb{S}^1$, the matrix $r_{-\pi/2 - \theta}$ rotates $\theta$ into $-\pi/2$. We thus get that
\begin{equation}
    \tilde{f}_\epsilon(t; S, \theta) = \tilde{f}_\epsilon(t; r_{-\pi/2 - \theta} S, -\pi/2).
\end{equation}
Similarly, the matrix $a_{\log\frac{1}{2\epsilon}}$ scales lengths in the horizontal direction by $\frac{1}{2\epsilon}$, and in the vertical direction by $2\epsilon$. We thus get
\begin{equation}
    \tilde{f}_\epsilon(t; r_{-\pi/2 - \theta} S, -\pi/2) = \tilde{f}_\frac{1}{2}\left(t; a_{\log\frac{1}{2\epsilon}} r_{-\pi/2 - \theta} S, -\pi/2\right).
\end{equation}
This proves the lemma.
\end{proof}

\begin{proposition}[Limiting Distribution]
\label{prop: heights distribution}
Let $\mu$ be an ergodic $\SL(2, \RR)$-invariant probability measure on $\mathcal{H}^1(\alpha)$.
Then for $\mu$-a.e. $S \in \mathcal{H}^1(\alpha)$, and for any $t \in [0, \infty)$
\begin{equation}
    \label{eq: heights distribution limit}
    \lim_{\epsilon \to 0+} \tilde{F}_\epsilon(t; S) = \mathcal{F}_\mu(t),
\end{equation}
where
\begin{equation}
    \mathcal{F}_\mu(t) := \int_{\mathcal{H}^1(\alpha)} f_\frac{1}{2}(t; S, -\pi/2)\,d\mu(S).
\end{equation}
\end{proposition}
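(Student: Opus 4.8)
The plan is to read \cref{lemm: renormalization} as reducing the proposition to the equidistribution of \emph{expanding circles} under the geodesic flow. Writing $s(\epsilon):=\log\frac{1}{2\epsilon}$, so that $s(\epsilon)\to+\infty$ as $\epsilon\to0+$, and setting $\psi(S):=f_{1/2}(t;S,-\pi/2)$ (a bounded measurable function with $0\le\psi\le1$, since $\vol_S(S)=1$), the renormalization identity together with the substitution $\phi=-\pi/2-\theta$ and the $2\pi$-periodicity of the integrand gives
\begin{equation}
\tilde F_\epsilon(t;S)=\frac{1}{2\pi}\int_0^{2\pi}\psi\bigl(a_{s(\epsilon)}r_\phi S\bigr)\,d\phi=:A_{s(\epsilon)}(S),
\end{equation}
where $A_s$ is the average of $\psi$ over the $a_s$-translate of the circle $\{r_\phi S:\phi\in\mathbb{S}^1\}$. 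The proposition is thus exactly the assertion that, for $\mu$-a.e.\ $S$, these expanding circle averages converge to the space average $\mathcal F_\mu(t)=\int\psi\,d\mu$.

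First I would prove convergence in mean square. Since $\mu$ is $\SL(2,\RR)$-invariant and ergodic, the Koopman representation of $\SL(2,\RR)$ on the orthogonal complement of the constants in $L^2(\mu)$ has no nonzero invariant vectors, so by the Howe--Moore theorem its matrix coefficients vanish at infinity; in particular the geodesic flow $a_s$ is mixing. After subtracting the constant $\mathcal F_\mu(t)$ we may assume $\int\psi\,d\mu=0$ and aim to show $\|A_s\|_{L^2(\mu)}\to0$. Expanding the square, using the rotation-invariance of $\mu$ to collapse the resulting double integral over $K:=\{r_\phi\}$ into a single one, and then using the $a_s$-invariance of $\mu$, one rewrites the relevant correlation as the matrix coefficient $\langle\psi,\,U(a_sr_\phi a_{-s})\psi\rangle$. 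For every $\phi\notin\{0,\pi\}$ the element $a_sr_\phi a_{-s}$ leaves every compact subset of $\SL(2,\RR)$ as $s\to\infty$ (its off-diagonal entries grow like $e^{2s}\sin\phi$), so this coefficient tends to $0$ by Howe--Moore; as the integrand is bounded by $\|\psi\|_\infty^2$ and $\{0,\pi\}$ is null, dominated convergence over $\phi\in\mathbb{S}^1$ yields $\|A_s\|_{L^2(\mu)}\to0$. Hence $\tilde F_\epsilon(t;\cdot)\to\mathcal F_\mu(t)$ in $L^2(\mu)$, and in particular $\mu$-a.e.\ along some sequence $\epsilon_n\to0$.

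The main obstacle is upgrading this mean-square (and merely subsequential a.e.) statement to genuine pointwise convergence along the full continuous limit $\epsilon\to0+$, with no spectral-gap hypothesis available for a general ergodic $\mu$ (so Borel--Cantelli with an effective mixing rate is not an option). For this I would invoke a pointwise ergodic theorem for circle averages on $\SL(2,\RR)$, in the spirit of Nevo and Nevo--Stein: because $\psi\in L^\infty(\mu)\subset L^p(\mu)$ for some $p>1$ and the action is ergodic, the averages $A_s(S)$ converge for $\mu$-a.e.\ $S$ as $s\to\infty$, and the mean-square computation above already pins the limit down as $\int\psi\,d\mu$. The delicate point I expect to have to address carefully is that $A_s$ is a one-sided average over an $a_s$-translated $K$-orbit rather than a bi-$K$-invariant spherical average, so matching it to (or adapting) the available pointwise theorems, equivalently establishing the underlying maximal inequality, is where the real work lies.

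Finally I would make the a.e.\ statement simultaneous in $t$. Both $\tilde F_\epsilon(t;S)$ and $\mathcal F_\mu(t)$ are non-increasing in $t$ (the integrand is $\mathbbm{1}_{(t,\infty)}$), so I would fix a countable dense set $Q\subset[0,\infty)$ containing the at most countably many discontinuities of the monotone function $\mathcal F_\mu$, intersect over $t\in Q$ the $\mu$-conull sets on which the convergence of the previous step holds, and then extend to arbitrary $t$ by sandwiching $t$ between nearby points of $Q$ and invoking monotonicity together with the continuity of $\mathcal F_\mu$ off $Q$. This produces a single $\mu$-conull set of surfaces on which $\lim_{\epsilon\to0+}\tilde F_\epsilon(t;S)=\mathcal F_\mu(t)$ for every $t\in[0,\infty)$, as required.
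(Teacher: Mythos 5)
Your reduction is exactly the paper's: \cref{lemm: renormalization} turns $\tilde F_\epsilon(t;S)$ into the average of $\psi=f_{1/2}(t;\cdot,-\pi/2)$ over the expanding circle $\{a_s r_\phi S\}$ with $s=\log\frac{1}{2\epsilon}\to\infty$, so the proposition becomes the equidistribution of expanding circle averages for $\mu$-a.e.\ basepoint. The difference is in how that equidistribution is obtained: the paper simply cites \cite[Theorem 1.5]{eskin2001asymptotic}, which \emph{is} verbatim the statement that $\frac{1}{2\pi}\int\psi(a_s r_\phi S)\,d\phi\to\int\psi\,d\mu$ for $\mu$-a.e.\ $S$, whereas you set out to reprove it. Your $L^2$ half is correct and complete (the correlation collapses to $\langle U(a_s r_\phi a_{-s})\psi,\psi\rangle$, the conjugated rotations leave every compact set for $\phi\notin\{0,\pi\}$, and Howe--Moore plus dominated convergence finish it), and your closing step --- a countable dense set of $t$'s plus monotonicity and continuity of $\mathcal F_\mu$ off its countable discontinuity set --- is also correct, and is in fact a detail the paper leaves entirely implicit.

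The pointwise upgrade, however, is a genuine gap in your write-up, and you flag it yourself: the Nevo/Nevo--Stein pointwise theorems govern bi-$K$-invariant spherical averages, and passing from those to the one-sided averages $\frac{1}{2\pi}\int\psi(a_s r_\phi S)\,d\phi$ (equivalently, establishing the relevant maximal inequality) is exactly ``where the real work lies'' --- which is to say, it is exactly the content of the theorem the paper cites; Eskin and Masur carry out that deduction from Nevo's theorem using the $KAK$ decomposition and (uniform) continuity of the test function. So you have correctly identified the structure, proved the easy half, and left open precisely the black box the paper outsources. One further point that neither you nor the paper addresses: $\psi$ is a bounded measurable function on a non-compact moduli space rather than a compactly supported continuous one, so even the citation strictly requires an approximation and non-escape-of-mass argument before it applies.
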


\begin{proof}
The proposition follows from \cref{lemm: renormalization} and \cite[theorem 1.5]{eskin2001asymptotic}.
\end{proof}

\bibliographystyle{amsplain}
\bibliography{refs}

\end{document}